\numberwithin{equation}{section}
\newtheorem*{assumption*}{Assumption}
\newtheorem{assumption}{Assumption}
\newtheorem{remark}{Remark}
\numberwithin{remark}{section}
\newtheorem*{remark*}{Remark}
\newtheorem{lemma}{Lemma}
\numberwithin{lemma}{section}
\newtheorem{theorem}{Theorem}
\let \b=\boldsymbol
\begin{document}
\title{Discrete Green functions of the SDFEM on Shishkin triangular meshes}

\author{%
Jin Zhang\thanks{Email: jinzhangalex@hotmail.com}
\footnote{School of Mathematical Sciences, Shandong Normal University,
Jinan 250014, China}}
\maketitle

\begin{abstract}
We present bounds   of  discrete Green functions  in the  energy norm for the standard (or modified) streamline
diffusion finite element method (SDFEM) on Shishkin triangular meshes.
\end{abstract}
\section{Problem}
We consider the singularly perturbed boundary value problem
 \begin{equation}\label{eq:model problem}
 \begin{array}{rcl}
-\varepsilon\Delta u+\boldsymbol{b}\cdot \nabla u+cu=f & \mbox{in}& \Omega=(0,1)^{2},\\
 u=0 & \mbox{on}& \partial\Omega ,
 \end{array}
 \end{equation}
where $\varepsilon\ll |\boldsymbol{b}|$ is a small positive parameter, 
$\boldsymbol{b}=(b_{1},b_{2})^{T}$ is a constant vector with $b_1>0,b_2>0$ and $c>0$  is constant. It is also
assumed that $f$ is sufficiently smooth. The solution of \eqref{eq:model problem} typically has
two exponential layers of width $O(\varepsilon\ln(1/\varepsilon))$ at the sides
$x=1$ and $y=1$ of $\Omega$.

\section{The SDFEM on Shishkin meshes}

\subsection{Shishkin meshes }
When discretizing \eqref{eq:model problem}, we use \textit{Shishkin} meshes, which are piecewise uniform. See \cite{Roos:1998-Layer,Roo1Sty2Tob3:2008-Robust,Linb:2003-Layer} for a detailed discussion of their properties and applications.

First, we define two mesh transition parameters, which are to be used to specify the mesh changes from coarse to fine in $x-$ and $y-$direction,
\begin{equation*}
\lambda_{x}:=\min\left\{ \frac{1}{2},\rho\frac{\varepsilon}{\beta_{1}}\ln N \right\} \quad \mbox{and} \quad
\lambda_{y}:=\min\left\{
\frac{1}{2},\rho\frac{\varepsilon}{\beta_{2}}\ln N \right\},
\end{equation*}
where $\beta_1$ and $\beta_2$ are defined as in \cite[Assumption 2.1]{Linb1Styn2:2001-SDFEM}.  For technical reasons, we set $\rho=2.5$ in our analysis which is the same as ones in \cite{Zhang:2003-Finite} and \cite{Styn1Tobi2:2003-SDFEM}. The domain $\Omega$ is dissected into four parts as $\bar{\Omega}=\Omega_{s}\cup\Omega_{x}\cup\Omega_{y}\cup\Omega_{xy}$ (see Figure \ref{Shishkin mesh}), where
\begin{align*}
&\Omega_{s}:=\left[0,1-\lambda_{x}\right]\times\left[0,1-\lambda_{y}\right],&&
\Omega_{x}:=\left[ 1-\lambda_{x},1 \right]\times\left[0,1-\lambda_{y}\right],\\
&\Omega_{y}:=\left[0,1-\lambda_{x}\right]\times\left[1-\lambda_{y},1 \right],&&
\Omega_{xy}:=\left[ 1-\lambda_{x},1 \right]\times\left[1-\lambda_{y},1 \right].
\end{align*}
\begin{assumption}
We assume  that $\varepsilon\le N^{-1}$, as is generally the case in practice. Furthermore we assume that
$\lambda_{x}=\rho\varepsilon\beta^{-1}_{1}\ln N$ and $\lambda_{y}=\rho\varepsilon\beta^{-1}_{2}\ln N$ as otherwise $N^{-1}$ is exponentially small compared with $\varepsilon$.
\end{assumption}

\begin{figure}
\begin{minipage}[t]{0.5\linewidth}
\centering
\includegraphics[width=2.5in]{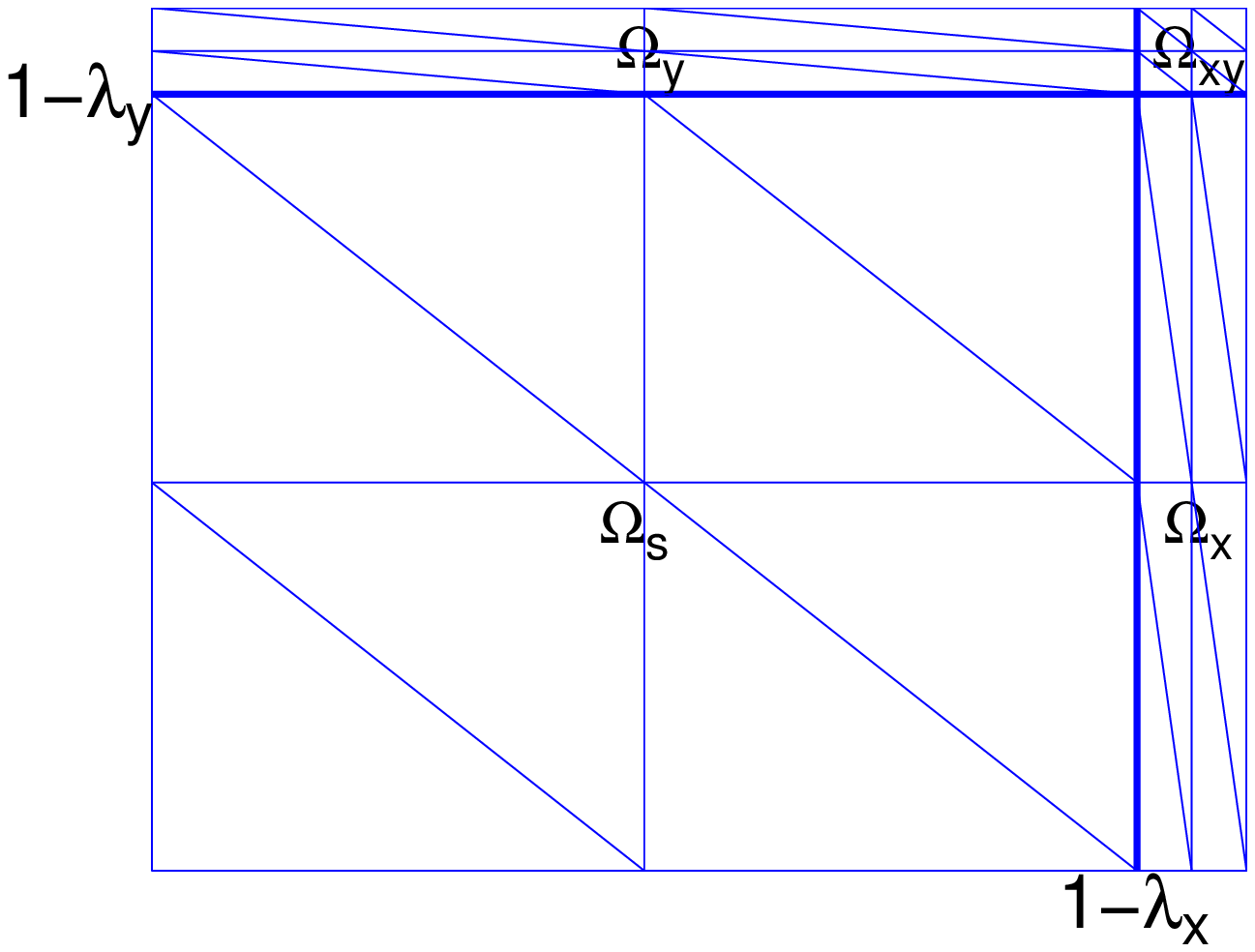}
\caption{Dissection of $\Omega$ and triangulation $\mathcal{T}_{N}$.}
\label{Shishkin mesh}
\end{minipage}%
\begin{minipage}[t]{0.5\linewidth}
\centering
\includegraphics[width=2.5in]{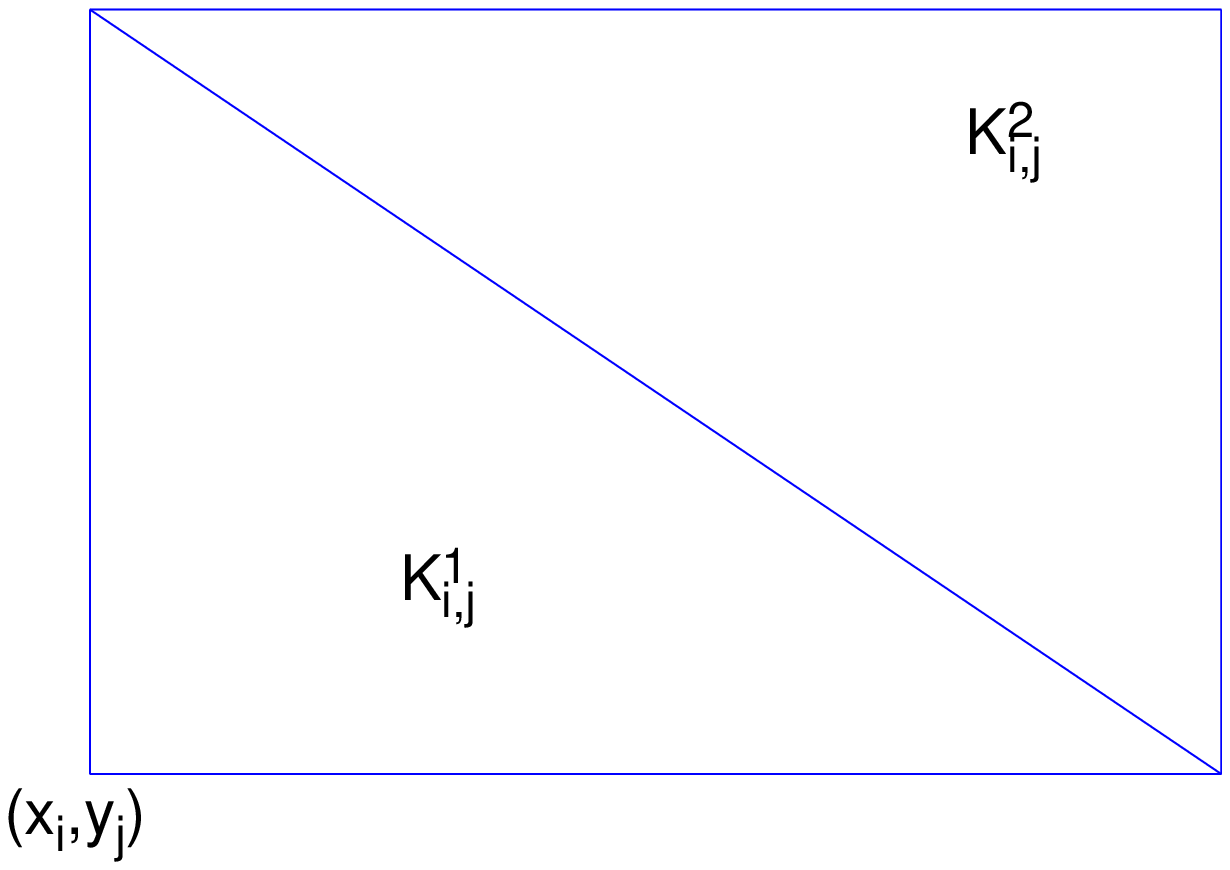}
\caption{$K^{1}_{i,j}$ and $K^{2}_{i,j}$}
\label{fig:code of mesh}
\end{minipage}
\end{figure}

\par
Next, we define the set of mesh points $\left\{ (x_{i},y_{j})\in\Omega:i,\,j=0,\,\cdots,\,N  \right\}$
\begin{numcases}{x_{i}=}
2i(1-\lambda_{x})/N &\text{for $i=0,\,\cdots,\,N/2$}, \nonumber\\
1-2(N-i)\lambda_{x}/N &\text{for $i=N/2+1,\,\cdots,\,N$}\nonumber
\end{numcases}
and
\begin{numcases}{y_{j}=}
2j(1-\lambda_{y})/N &\text{for $j=0,\,\cdots,\,N/2$}, \nonumber\\
1-2(N-j)\lambda_{y}/N &\text{for $j=N/2+1,\,\cdots,\,N$}.\nonumber
\end{numcases}
By drawing lines through these mesh points parallel to the $x$-axis and $y$-axis the domain $\Omega$ is partitioned into rectangles. Each rectangle is divided into two triangles by drawing the diagonal which runs from $(x_i,y_{j+1})$ to $(x_{i+1},y_{j})$. This yields a triangulation of $\Omega$  denoted by $\mathcal{T}_{N}$(see Fig. \ref{Shishkin mesh}).
The mesh sizes $h_{x,i}:=x_{i+1}-x_{i}$ and $h_{y,j}:=y_{j+1}-y_{j}$ satisfy
\begin{numcases}{h_{x,i}=}
H_{x}:=\frac{1-\lambda_{x}}{N/2}  &\text{for $i=0,\,\cdots,\,N/2-1$}, \nonumber\\
h_{x}:=\frac{\lambda_{x}}{N/2}  &\text{for $i=N/2,\,\cdots,\,N-1$} \nonumber
\end{numcases}
and
\begin{numcases}{h_{y,j}=}
H_{y}:=\frac{1-\lambda_{y}}{N/2}  &\text{for $j=0,\,\cdots,\,N/2-1$},\nonumber \\
h_{y}:=\frac{\lambda_{y}}{N/2}  &\text{for $j=N/2,\,\cdots,\,N-1$}.\nonumber
\end{numcases}
The mesh sizes $h_{x,i}$ and $h_{y,j}$ satisfy
\begin{equation*}
N^{-1}\le H_{x},H_{y} \le 2N^{-1} \quad \mbox{and} \quad
C_{1}\varepsilon N^{-1}\ln N \le h_{x},h_{y}\le C_{2}\varepsilon N^{-1}\ln N.
\end{equation*}

For convenience, we shall use some notations: $K^1_{i,j}$ for the mesh triangle with vertices $(x_i,y_j)$, $(x_{i+1},y_j)$ and $(x_i,y_{j+1})$;  $K^2_{i,j}$  for the mesh triangle with vertices
$(x_i,y_{j+1})$, $(x_{i+1},y_j)$ and $(x_{i+1},y_{j+1})$ (see Fig. \ref{fig:code of mesh}); $\tau$ or $K$ for a generic mesh triangle.

\subsection{The streamline diffusion finite element method}
On the above Shishkin meshes we define a $C^0$ linear finite element space
\begin{equation*}
V^{N}:=\{v^{N}\in C(\bar{\Omega}):v^{N}|_{\partial\Omega}=0
 \text{ and $v^{N}|_{K}\in P_{1}(K)$, }
  \forall K\in \mathcal{T}_{N} \}.
\end{equation*}

Now we are in a position to state the standard SDFEM for \eqref{eq:model problem} which reads: 
\begin{equation}\label{eq:SDFEM-I}
\left\{
\begin{split}
&\text{Find $u^{N}\in V^{N}$ such that for all $v^{N}\in V^{N}$},\\
&a_{SD}(u^{N},v^{N})=(f,v^{N})+\underset{K\subset\Omega}\sum(f,\delta_{K}\boldsymbol{b}\cdot\nabla v^{N})_{K},
\end{split}
\right.
\end{equation}
where
\begin{align*}
a_{SD}(u^{N},v^{N})=&\varepsilon (\nabla u^{N},\nabla v^{N})+(\boldsymbol{b}\cdot\nabla u^{N},v^{N})
+(cu^{N},v^{N})\\
&+\sum_{K\subset\Omega}(-\varepsilon\Delta u^{N}+\boldsymbol{b}\cdot\nabla u^{N}+cu^{N},\delta_{K}\boldsymbol{b}\cdot\nabla v^{N})_{K}.
\end{align*}
Note that $\Delta (u^N|_K) =0$   for $u^N\vert_K\in P_1(K)$. Following usual practice
\cite{Roo1Sty2Tob3:2008-Robust},  the parameter $\delta_{K}:=\delta|_K$ is defined as follows
\begin{equation}\label{eq: delta-K}
\delta:=\delta(x,y)=
\left\{
\begin{matrix}
C^{\ast}N^{-1}
&\text{if $(x,y)\in \Omega_{s}$},\\
0&\text{otherwise},
\end{matrix}
\right.
\end{equation}
where $C^{\ast}$ is referred to \cite[Lemma 3.25]{Roo1Sty2Tob3:2008-Robust}.

\par
We set
\begin{equation*}
    b:=\sqrt{b^{2}_{1}+b^{2}_{2}},\quad
    \boldsymbol{\beta}:=\genfrac(){0cm}{0}{b_{1}}{b_{2}}/b,\quad
    \boldsymbol{\eta}:=\genfrac(){0cm}{0}{-b_{2}}{b_{1}}/b
     \quad\mbox{and}\quad
    v_{\zeta}:=\boldsymbol{\zeta}^{T}\nabla v
\end{equation*}
for any vector $\boldsymbol{\zeta}$ of unit length. By an easy calculation one shows that
\begin{equation*}
(\nabla w,\nabla v)=(w_{\beta},v_{\beta})+(w_{\eta},v_{\eta}).
\end{equation*}
We rewrite \eqref{eq:SDFEM-I} as
\begin{align*}
&\varepsilon(u^N_{\beta},v^{N}_{\beta})+\varepsilon(u^N_{\eta},v^{N}_{\eta})
+(bu^N_{\beta}+u^N,v^{N})
+\sum_{ K\subset\Omega } (bu^N_{\beta}+cu^N,\delta_K b v^{N}_{\beta})_K\\
&=(f,v^{N})+\sum_{ K\subset\Omega } (f,\delta_K b v^{N}_{\beta})_K.
\end{align*}

For technical reasons in the later analysis, we increase the crosswind diffusion(see \cite{Joh1Sch2Wah3:1987-Crosswind,Niijima:1990-Pointwise,Roo1Sty2Tob3:1996-Numerical}) by replacing $\varepsilon(u^N_{\eta},v^{N}_{\eta})$ by
$\hat{\varepsilon}(u^N_{\eta},v^{N}_{\eta})$ where $\varepsilon \le \hat{\varepsilon} \le N^{-1}$ and $\hat{\varepsilon}$ is constant on each of subdomains including $\Omega_s,\Omega\setminus\Omega_s$. For convenience, we denote $\hat{\varepsilon}|_{\Omega_s}$by $\hat{\varepsilon}_s$.
We will consider two cases: (1) $\hat{\varepsilon}=\varepsilon$; (2) $\hat{\varepsilon}$ defined as in  \cite[pg 463]{Linb1Styn2:2001-SDFEM}, that is
$$
\hat{\varepsilon}=
\left\{
\begin{matrix}
\tilde{\varepsilon} & \text{if $\b{x}\in \Omega_s$},\\
\varepsilon &  \text{if $\b{x}\in \Omega\setminus\Omega_s$}
\end{matrix}
\right.
$$
where $\tilde{\varepsilon}:=\max \left( \varepsilon, N^{-3/2} \right)$.

We now state a streamline diffusion method with artificial crosswind diffusion (ACD), also called a modified SDFEM:
\begin{equation*}
\left\{
\begin{split}
&\text{Find $u^N\in V^{N}$ such that for all $v^{N}\in V^{N}$}\\
&a_{MSD}(u^N,v^{N})=(f,v^{N}+\delta bv^{N}_{\beta}),
\end{split}
\right.
\end{equation*}
with
\begin{equation}\label{eq:SDFEM with ACD}
\begin{split}
a_{MSD}(u^N,v^{N}):=&\varepsilon(u^N_{\beta},v^{N}_{\beta})+\hat{\varepsilon}(u^N_{\eta},v^{N}_{\eta})
+(bu^N_{\beta}+u^N,v^{N})\\
&+\sum_{  K\subset\Omega  } (bu^N_{\beta}+u^N,\delta_K b v^{N}_{\beta})_K.
\end{split}
\end{equation}
Finally, we define a special energy norm associated with $a_{MSD}(\cdot,\cdot)$:
\begin{equation*}
\Vert v^N \Vert^{2}_{MSD}:=
\varepsilon \Vert  v^N_{\beta}\Vert^2+
\hat{\varepsilon} \Vert v^N_{\eta} \Vert^2+\Vert v^N \Vert^2+\sum_{  K\subset\Omega  } \delta_K \Vert b v^N_{\beta} \Vert^2_K,\quad \forall v^N\in V^N.
\end{equation*}
For brevity, we often write $\varepsilon(u^N_{\beta},v^{N}_{\beta})+\sum\limits_{  K\subset\Omega  }(bu^N_{\beta}, \delta_{K} \;bv^N_{\beta})_K$ as
 $(\varepsilon+b^2\delta)(u^N_{\beta},v^{N}_{\beta})$.

\section{The discrete Green function}
Let $\boldsymbol{x}^{\ast}$ be a mesh node in $\Omega$. The discrete Green's function $G\in V^{N}$ associated with $\boldsymbol{x}^{\ast}$ is defined by
\begin{equation}\label{eq:discrete Green function}
a_{MSD}(v^{N},G)=v^{N}(\boldsymbol{x}^{\ast})\quad \forall v^{N}\in V^{N}.
\end{equation}

\par\noindent
The weight function $\omega$ is defined by
\begin{equation*}
\omega(\boldsymbol{x}):=
g\left(\frac{(\boldsymbol{x}-\boldsymbol{x}^{\ast})\cdot\boldsymbol{\beta}}{\sigma_{\beta}}\right)
g\left(\frac{(\boldsymbol{x}-\boldsymbol{x}^{\ast})\cdot\boldsymbol{\eta}}{\sigma_{\eta}}\right)
g\left(-\frac{(\boldsymbol{x}-\boldsymbol{x}^{\ast})\cdot\boldsymbol{\eta}}{\sigma_{\eta}}\right)
\end{equation*}
where
\begin{equation*}
g(r)=\frac{2}{1+e^{r}}\quad\text{ for $r\in(-\infty,\infty)$}.
\end{equation*}

Now,  we are to derive a global estimate on $G$ in the weighted energy norm 
\begin{align}
 \vvvert G \vvvert^{2}_{\omega}:= &\varepsilon \Vert \omega^{-1/2}G_{\beta} \Vert^{2}
+\hat{\varepsilon}\Vert \omega^{-1/2}G_{\eta} \Vert^{2}+\frac{b}{2}\Vert (\omega^{-1})^{1/2}_{\beta}G\Vert^{2}\label{eq:weighted norm}\\
&+c\Vert \omega^{-1/2}G \Vert^{2}
+\sum_K b^{2}\delta_K\Vert \omega^{-1/2}G_{\beta} \Vert^{2}_K\nonumber.
\end{align}

From \eqref{eq:SDFEM with ACD} and \eqref{eq:weighted norm}, we have
\begin{align}\label{eq:norm and bilinear form}
\vvvert G \vvvert^{2}_{\omega}
=&a_{MSD}(\omega^{-1}G,G)-\varepsilon((\omega^{-1})_{\beta}G,G_{\beta})
- \hat{\varepsilon} ((\omega^{-1})_{\eta}G,G_{\eta})\\
&
-\sum_K  ( b(\omega^{-1})_{\beta}G+c \omega^{-1}G,\delta_K \; bG_{\beta})_K
\nonumber.
\end{align}
Considering \eqref{eq:discrete Green function} we also have
\begin{equation}\label{eq:idea-weighted estimate}
\begin{split}
a_{MSD}(\omega^{-1}G,G)&=a_{MSD}(E,G)+a_{MSD}((\omega^{-1}G)^{I},G)\\
&=a_{MSD}(E,G)+(\omega^{-1}G)(\boldsymbol{x}^{\ast})
\end{split}
\end{equation}
where $E:=\omega^{-1}G-(\omega^{-1}G)^{I}$. 

%

\begin{lemma}\label{lem:1}
If 
\begin{equation}\label{eq:sigma-beta-eta-1}
\sigma_{\beta} \ge k(\varepsilon+\delta_M) \quad \text{and}\quad\sigma_{\eta}\ge k   \hat{\varepsilon }^{1/2}_M, 
\end{equation}
where $\delta_M:=\max_{\b{x}\in\Omega }\delta$ and $\hat{\varepsilon }_M:=\max_{\b{x}\in\Omega }\hat{\varepsilon }$, 
then for $k>1$ sufficiently
large and independent of $N$ and $\varepsilon$, we have
\begin{equation*}
a_{MSD}(\omega^{-1}G,G)\ge \frac{1}{4} \vvvert G  \vvvert^{2}_{\omega}.
\end{equation*}
\end{lemma}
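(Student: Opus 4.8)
The plan is to start from the identity \eqref{eq:norm and bilinear form}, which expresses $\vvvert G\vvvert_\omega^2$ as $a_{MSD}(\omega^{-1}G,G)$ minus three ``error'' terms coming from the fact that $\omega^{-1}$ is not constant. Rearranging, it suffices to show that each of these three terms — namely $\varepsilon((\omega^{-1})_\beta G,G_\beta)$, $\hat\varepsilon((\omega^{-1})_\eta G,G_\eta)$, and the SD-stabilization term $\sum_K(b(\omega^{-1})_\beta G+c\omega^{-1}G,\delta_K b G_\beta)_K$ — can be absorbed into $\tfrac{3}{4}\vvvert G\vvvert_\omega^2$, so that what remains on the left is at least $\tfrac14\vvvert G\vvvert_\omega^2$. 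Equivalently, I will prove the two-sided bound that these cross terms are bounded by (a small constant times) $\vvvert G\vvvert_\omega^2$, where the smallness is controlled by $1/k$.

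The key computational input is pointwise control of the logarithmic derivatives of $\omega$. Writing $\omega=g(s_\beta)g(s_\eta)g(-s_\eta)$ with $s_\beta=(\boldsymbol x-\boldsymbol x^\ast)\cdot\boldsymbol\beta/\sigma_\beta$ and $s_\eta=(\boldsymbol x-\boldsymbol x^\ast)\cdot\boldsymbol\eta/\sigma_\eta$, and using $g'(r)/g(r)=-\tfrac{1}{2}(2-g(r))\in[-1,0]$, one gets
\[
\bigl|(\omega^{-1})_\beta\bigr|\le \frac{C}{\sigma_\beta}\,\omega^{-1},
\qquad
\bigl|(\omega^{-1})_\eta\bigr|\le \frac{C}{\sigma_\eta}\,\omega^{-1},
\]
so each cross term acquires a factor $\sigma_\beta^{-1}$ or $\sigma_\eta^{-1}$ after bounding $\omega^{-1}$ by $(\omega^{-1})^{1/2}\cdot(\omega^{-1})^{1/2}$. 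Then I apply Cauchy--Schwarz to split each term into a piece controlled by one of the squared quantities appearing in \eqref{eq:weighted norm} and a piece controlled by another. For the $\varepsilon$-term: $\varepsilon|((\omega^{-1})_\beta G,G_\beta)|\le \tfrac{C\varepsilon}{\sigma_\beta}\|\omega^{-1/2}G\|\,\|\omega^{-1/2}G_\beta\|\le \tfrac12\varepsilon\|\omega^{-1/2}G_\beta\|^2+\tfrac{C^2\varepsilon}{2\sigma_\beta^2}\|\omega^{-1/2}G\|^2$; since $\sigma_\beta\ge k(\varepsilon+\delta_M)\ge k\varepsilon$, the second piece is $\le \tfrac{C^2}{2k^2\varepsilon}\|\omega^{-1/2}G\|^2$, hmm — this needs care, so instead I pair $\varepsilon/\sigma_\beta$ against the $\tfrac b2\|(\omega^{-1})_\beta^{1/2}G\|^2$ term, which contributes $\sim \tfrac{1}{\sigma_\beta}\|\omega^{-1/2}G\|^2$, giving a clean factor $\varepsilon/\sigma_\beta\le 1/k$. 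Analogously the $\hat\varepsilon$-term pairs against itself and the $c\|\omega^{-1/2}G\|^2$ term using $\hat\varepsilon/\sigma_\eta^2\le 1/(k^2)$, and the SD-term splits: $b^2\delta_K$ against $\sum_K b^2\delta_K\|\omega^{-1/2}G_\beta\|^2_K$ on one side and, using $\delta_K\le\delta_M$ and $\sigma_\beta\ge k\delta_M$, a factor $\delta_M/\sigma_\beta\le 1/k$ times the weighted-$L^2$ pieces on the other.

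The main obstacle is bookkeeping: ensuring that every leftover piece after Cauchy--Schwarz lands inside a term that genuinely appears in $\vvvert G\vvvert_\omega^2$ — in particular the $c\|\omega^{-1/2}G\|^2$ and $\tfrac b2\|(\omega^{-1})_\beta^{1/2}G\|^2$ terms must be exploited to soak up the $L^2$-type residuals, since there is no ``free'' $\|\omega^{-1/2}G\|^2$ of arbitrary size — and then checking that the sum of all the accumulated small constants is $\le\tfrac34$ once $k$ is chosen large enough (depending only on $b$, $c$, $C^\ast$ and the absolute constant $C$ from the $g'/g$ bound, none of which depend on $N$ or $\varepsilon$). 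I also need the elementary fact that $(\omega^{-1})_\beta\ge 0$ (so $\tfrac b2\|(\omega^{-1})_\beta^{1/2}G\|^2$ makes sense as written), which follows because $g$ is decreasing so $\omega^{-1}$ increases in the $\boldsymbol\beta$-direction; this sign is what makes the term usable on the right-hand side rather than an obstruction. Once the constants are collected, choosing $k$ large gives $a_{MSD}(\omega^{-1}G,G)=\vvvert G\vvvert_\omega^2+(\text{cross terms})\ge\vvvert G\vvvert_\omega^2-\tfrac34\vvvert G\vvvert_\omega^2=\tfrac14\vvvert G\vvvert_\omega^2$, as claimed.
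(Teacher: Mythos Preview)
Your proposal is correct and follows essentially the same route as the paper: start from the identity \eqref{eq:norm and bilinear form}, estimate each cross term by H\"older and Cauchy--Schwarz, and absorb the residuals into the pieces of $\vvvert G\vvvert_\omega^2$ using the smallness factors $(\varepsilon+\delta_M)/\sigma_\beta\le 1/k$ and $\hat\varepsilon_M/\sigma_\eta^2\le 1/k^2$, together with the sign $(\omega^{-1})_\beta\ge 0$. One point to make precise when you write it up: for the $\beta$-cross term you must split the integrand as $(\omega^{-1})_\beta G\, G_\beta=\bigl[(\omega^{-1})_\beta^{1/2}G\bigr]\cdot\bigl[(\omega^{-1})_\beta^{1/2}G_\beta\bigr]$ and bound only the second factor by $C\sigma_\beta^{-1/2}\omega^{-1/2}|G_\beta|$ \emph{before} Cauchy, so that the leftover is directly $C(\varepsilon+\delta)\sigma_\beta^{-1}\Vert(\omega^{-1})_\beta^{1/2}G\Vert^2$ (this is exactly what the paper does); your phrasing suggests first arriving at $\sigma_\beta^{-2}\Vert\omega^{-1/2}G\Vert^2$ and then comparing it to $\Vert(\omega^{-1})_\beta^{1/2}G\Vert^2$, but that step fails because only the upper bound $(\omega^{-1})_\beta\le C\sigma_\beta^{-1}\omega^{-1}$ holds, not the reverse.
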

\begin{proof}
H\"{o}lder inequalities, Cauchy inequalities and \cite[Lemma 4.1]{Linb1Styn2:2001-SDFEM} give
\begin{align*}
&(\varepsilon+b^{2}\delta)\left|((\omega^{-1})_{\beta}G,G_{\beta})\right|\\
\le&
C\varepsilon^{1/2} \sigma^{-1/2}_{\beta} \Vert  (\omega^{-1})_{\beta}^{1/2}G \Vert  \cdot\varepsilon^{1/2}  \Vert  \omega^{-1/2} G _{\beta} \Vert \\
&+\sum_K Cb\delta_K^{1/2}  \sigma^{-1/2}_{\beta}\Vert  (\omega^{-1})_{\beta}^{1/2}G \Vert_K\cdot  b\delta_K^{1/2}  \Vert  \omega^{-1/2} G _{\beta} \Vert_K\\
\le&
\frac{1}{2} \varepsilon  \Vert  \omega^{-1/2} G _{\beta} \Vert^2
+C \varepsilon  \sigma^{-1}_{\beta}  \Vert  (\omega^{-1})_{\beta}^{1/2}G \Vert^2+\frac{1}{4}\sum_K b^{2}\delta_K\Vert \omega^{-1/2}G_{\beta} \Vert^{2}_K\\
&+C \sum_K \delta_K   \sigma^{-1}_{\beta}  \Vert  (\omega^{-1})_{\beta}^{1/2}G \Vert^2 \\
\le&
\frac{1}{2} \varepsilon  \Vert  \omega^{-1/2} G _{\beta} \Vert^2+\frac{1}{4}\sum_K b^{2}\delta_K\Vert \omega^{-1/2}G_{\beta} \Vert^{2}_K+
C(\varepsilon +\delta )\sigma^{-1}_{\beta}  \Vert  (\omega^{-1})_{\beta}^{1/2}G \Vert^2.
\end{align*}
Similarly, we have
\begin{align*}
\hat{\varepsilon }\left|((\omega^{-1})_{\eta}G,G_{\eta})\right|
&\le
C\hat{\varepsilon }^{1/2}\sigma^{-1}_{\eta}\cdot
\Vert \omega^{-1/2}G \Vert
\cdot\hat{\varepsilon }^{1/2}\Vert \omega^{-1/2}G_{\eta} \Vert\\
&\le
\frac{1}{2}\hat{\varepsilon } \Vert \omega^{-1/2}G_{\eta} \Vert^2+\frac{1}{2}C\hat{\varepsilon } \sigma^{-2}_{\eta}\;\; c\Vert \omega^{-1/2}G \Vert^2
\end{align*}
and
\begin{align*}
\sum_K  ( c \omega^{-1}G,\delta_K \; bG_{\beta})_K
&\le 
\sum_K  c^{1/2} \delta_K^{1/2} \;\; c^{1/2} \Vert  \omega^{-1/2}G \Vert_K \cdot 
b\delta_K^{1/2} \Vert  \omega^{-1/2}G_{\beta} \Vert_K \\
&\le
\frac{1}{4}c\Vert  \omega^{-1/2}G \Vert^2+ \sum_K c   \delta_K\cdot  b^2\delta_K \Vert  \omega^{-1/2}G_{\beta} \Vert^2_K\\
&\le
\frac{1}{4}c\Vert  \omega^{-1/2}G \Vert^2+ \frac{1}{2}\sum_K   b^2\delta_K \Vert  \omega^{-1/2}G_{\beta} \Vert^2_K.
\end{align*}

If \eqref{eq:sigma-beta-eta-1} holds true and  $k$ is taken sufficiently
large and independent of $N$ and $\varepsilon$, 
then  we have
 $$
(\varepsilon+b^{2}\delta)| ((\omega^{-1})_{\beta}G,G_{\beta}) |+
\varepsilon|  ((\omega^{-1})_{\eta}G,G_{\eta}) |
+\sum_K  |( c \omega^{-1}G,\delta_K \; bG_{\beta})_K|
\le
\frac{3}{4}  \vvvert G  \vvvert^{2}_{\omega}.
$$
Considering \eqref{eq:norm and bilinear form}, we are done.

\end{proof}

\begin{lemma}\label{lem:2}
If  $1\ge \sigma_{\beta}\ge k (\varepsilon +\delta_M)$, with $k>0$ sufficiently
large and independent of $N$ and $\varepsilon$. Then for each mesh point $\boldsymbol{x}^{\ast}\in\Omega\setminus\Omega_{xy}$, we have
\begin{equation*}
\left|(\omega^{-1}G)(\boldsymbol{x}^{\ast})\right|
\le \frac{1}{16} \vvvert G  \vvvert^{2}_{\omega}+
\left\{
\begin{matrix}
CN^2\sigma_{\beta}& \text{if $\b{x}^*\in\Omega_s$}\\
CN\ln N& \text{if $\b{x}^*\in\Omega_x\cup\Omega_y$}
\end{matrix}
\right.
\end{equation*}
where $C$ is independent of $N$, $\varepsilon$ and $\boldsymbol{x}^{\ast}$.
\end{lemma}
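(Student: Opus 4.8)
The plan is to start from $\omega(\boldsymbol{x}^{*})=g(0)^{3}=1$, so that $(\omega^{-1}G)(\boldsymbol{x}^{*})=G(\boldsymbol{x}^{*})$ and the task reduces to bounding the nodal value $|G(\boldsymbol{x}^{*})|$. Since $G$ is a $P_{1}$ finite element function vanishing on $\partial\Omega$, I will control $|G(\boldsymbol{x}^{*})|$ by \emph{local} data of $G$ in a small set attached to $\boldsymbol{x}^{*}$, on which $\omega$ is comparable with $1$, and then feed that local data into $\vvvert G\vvvert_{\omega}^{2}$. Once I have an inequality of the shape $|G(\boldsymbol{x}^{*})|\le M\vvvert G\vvvert_{\omega}$, Young's inequality produces $\tfrac1{16}\vvvert G\vvvert_{\omega}^{2}+4M^{2}$, so everything comes down to proving $M^{2}\le CN^{2}\sigma_{\beta}$ on $\Omega_{s}$ and $M^{2}\le CN\ln N$ on $\Omega_{x}\cup\Omega_{y}$; a node on $\partial\Omega$ is trivial because then $G(\boldsymbol{x}^{*})=0$.

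\emph{Case $\boldsymbol{x}^{*}\in\Omega_{s}$.} I pick a coarse triangle $K^{*}$ with $\boldsymbol{x}^{*}$ as a vertex (possible because $\boldsymbol{x}^{*}\notin\Omega_{xy}$), so $|K^{*}|\ge\tfrac12 N^{-2}$. A scaling/inverse estimate for $P_{1}$ functions gives $G(\boldsymbol{x}^{*})^{2}\le C|K^{*}|^{-1}\|G\|_{L^{2}(K^{*})}^{2}\le CN^{2}\|G\|_{L^{2}(K^{*})}^{2}$. To handle $\|G\|_{L^{2}(K^{*})}^{2}$ I use the term $\tfrac b2\|(\omega^{-1})_{\beta}^{1/2}G\|^{2}$ of the weighted norm: since $\operatorname{diam}K^{*}\sim N^{-1}$ and $\sigma_{\beta}\ge k(\varepsilon+\delta_{M})\gtrsim N^{-1}$, for $k$ large $K^{*}$ is small on the scale $\sigma_{\beta}$, and a short computation from the definition of $g$ (using $g'(0)=-\tfrac12\neq0$, $0<g\le2$, $g(s)g(-s)\le1$) shows $(\omega^{-1})_{\beta}\ge c\,\sigma_{\beta}^{-1}$ on $K^{*}$; hence $\|G\|_{L^{2}(K^{*})}^{2}\le C\sigma_{\beta}\vvvert G\vvvert_{\omega}^{2}$ and $G(\boldsymbol{x}^{*})^{2}\le CN^{2}\sigma_{\beta}\vvvert G\vvvert_{\omega}^{2}$. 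Note that only the hypothesis on $\sigma_{\beta}$ is needed here, not one on $\sigma_{\eta}$, since $g(s)g(-s)\le1$ enters only as an upper bound.

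\emph{Case $\boldsymbol{x}^{*}=(x^{*},y^{*})\in\Omega_{x}$} (the case $\boldsymbol{x}^{*}\in\Omega_{y}$ is symmetric under $x\leftrightarrow y$, $\beta_{1}\leftrightarrow\beta_{2}$). Here the outflow side $\{x=1\}$ is at distance $1-x^{*}\le\lambda_{x}$. Along the mesh line $y=y^{*}$ the function $G(\cdot,y^{*})$ is continuous, piecewise linear, and zero at $x=1$, so $G(\boldsymbol{x}^{*})=-\int_{x^{*}}^{1}\partial_{x}G(x,y^{*})\,dx$ and Cauchy--Schwarz gives $G(\boldsymbol{x}^{*})^{2}\le(1-x^{*})\int_{x^{*}}^{1}(\partial_{x}G(x,y^{*}))^{2}\,dx$. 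Because $\partial_{x}G$ is constant on each mesh triangle, comparing this line integral with the area integral over the layer of mesh cells of height $H_{y}$ just above $\{y=y^{*}\}$ bounds it by $2H_{y}^{-1}\|\partial_{x}G\|_{L^{2}(\tilde R)}^{2}$, where $\tilde R:=(x^{*},1)\times(y^{*},y^{*}+H_{y})\subset\Omega_{x}$. On $\tilde R$ one has $\boldsymbol{x}-\boldsymbol{x}^{*}$ in the first quadrant, so $(\boldsymbol{x}-\boldsymbol{x}^{*})\cdot\boldsymbol{\beta}\ge0$ and hence $\omega\le1$ there; combined with $|\partial_{x}G|\le|G_{\beta}|+|G_{\eta}|$ and $\hat\varepsilon=\varepsilon$ on $\Omega_{x}$ this yields $\varepsilon\|\partial_{x}G\|_{L^{2}(\tilde R)}^{2}\le C\vvvert G\vvvert_{\omega}^{2}$. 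Putting the pieces together, $G(\boldsymbol{x}^{*})^{2}\le C(1-x^{*})(\varepsilon H_{y})^{-1}\vvvert G\vvvert_{\omega}^{2}\le C\lambda_{x}(\varepsilon H_{y})^{-1}\vvvert G\vvvert_{\omega}^{2}$, and since $\lambda_{x}=\rho\varepsilon\beta_{1}^{-1}\ln N$ and $H_{y}\ge N^{-1}$ the prefactor is $\le CN\ln N$.

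I expect the layer case to be the main obstacle: one has to choose the right element/edge of the anisotropic mesh, deal with $\boldsymbol{x}^{*}$ lying on the interface $x=1-\lambda_{x}$ or near the outflow corner (so that $\tilde R$ has the right shape and stays inside $\Omega_{x}$), and above all keep $\omega$ under control on the thin strip $\tilde R$ — which is precisely what the sign of $(\boldsymbol{x}-\boldsymbol{x}^{*})\cdot\boldsymbol{\beta}$ buys. Conceptually, the smallness of the prefactor comes from estimating $|G(\boldsymbol{x}^{*})|$ by purely local data where $\omega\simeq1$, the extra smallness being delivered either by the $\sigma_{\beta}$ extracted from $(\omega^{-1})_{\beta}$ (on $\Omega_{s}$) or by the cancellation $\lambda_{x}/\varepsilon\sim\ln N$ (on $\Omega_{x}\cup\Omega_{y}$).
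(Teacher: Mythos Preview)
Your proof is correct and follows essentially the same route as the paper: for $\boldsymbol{x}^{*}\in\Omega_{s}$ both you and the paper use an inverse estimate on a single coarse triangle adjacent to $\boldsymbol{x}^{*}$ together with the lower bound $(\omega^{-1})_{\beta}\ge c\,\sigma_{\beta}^{-1}$ there; for $\boldsymbol{x}^{*}\in\Omega_{x}$ both integrate $G_{x}$ from $\boldsymbol{x}^{*}$ to the outflow edge $x=1$, thicken the line integral to a strip of height $H_{y}$, and exploit $\lambda_{x}/\varepsilon\sim\ln N$. The only cosmetic differences are that the paper fixes the triangle with $\boldsymbol{x}^{*}$ as its north-east vertex and, in the layer case, passes through the unweighted norm $\Vert G\Vert_{MSD}$ (using the global bound $\omega\le 2$), whereas you keep the weight and use $\omega\le 1$ on the upwind strip $\tilde R$; both variants give the same constants up to fixed factors.
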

\begin{proof}
First let $\boldsymbol{x}^{\ast}\in\Omega_{s}$. Let $\tau^{\ast}$ be the unique triangle that has $\boldsymbol{x}^{\ast}$ as its north-east corner. Then
\begin{align*}
\left| (\omega^{-1}G)(\boldsymbol{x}^{\ast})\right|
&\le CN\Vert G \Vert_{\tau^{\ast}}
\le
CN\underset{\tau^{\ast}}{\max}
\left|(\omega^{-1})^{-1/2}_{\beta}\right|\cdot
\Vert (\omega^{-1})^{1/2}_{\beta}G \Vert_{\tau^{\ast}}.
\end{align*}
Calculating $(\omega^{-1})^{-1}_{\beta}(\boldsymbol{x})$ explicitly, we see that
\begin{equation*}
(\omega^{-1})^{-1}_{\beta}(\boldsymbol{x})
\le
C\sigma_{\beta}\quad\forall\boldsymbol{x}\in\tau^{\ast}.
\end{equation*}
Thus the arithmetic-geometric mean inequality gives
\begin{equation*}
\left| (\omega^{-1}G)(\boldsymbol{x}^{\ast})\right|
\le
CN^2 \sigma_{\beta}+\frac{1}{16} \vvvert G  \vvvert^{2}_{\omega}.
\end{equation*}

\par
Next, let $\boldsymbol{x}^{\ast}\in\Omega_{x}$. (The case $\boldsymbol{x}^{\ast}\in\Omega_{y}$ is similar.) Write
$\boldsymbol{x}^{\ast}=(x_{i},y_{j})$.
Then
\begin{align*}
\left| (\omega^{-1}G)(\boldsymbol{x}^{\ast})\right|
&=
\left|G(\boldsymbol{x}^{\ast})\right|
=\left|\int^{1}_{x_{i}}G_{x}(t,y_{j})\mathrm{d}t\right|
\le
CH^{-1}_{y}\int^{1}_{x_{i}}\int^{y_{j+1}}_{y_{j}}
\left| G_{x}(t,y) \right|\mathrm{d}y\mathrm{d}t\\
&\le
CN(\varepsilon\ln N\cdot N^{-1})^{1/2}\Vert G_{x} \Vert_{\Omega_{x}}\le
CN^{1/2}\ln^{1/2}N \vvvert G  \vvvert\\
&\le
CN\ln N+\frac{1}{16} \vvvert G  \vvvert^{2}_{\omega}.
\end{align*}
Thus, we are done.
\end{proof}

\begin{lemma}\label{lem:4}
If 
\begin{equation}\label{eq: conditions-1}
\begin{split}
(\varepsilon+b^{2}\delta)^{1/2}\Vert \omega^{1/2}E_{\beta} \Vert
&+\hat{\varepsilon}^{1/2}\Vert \omega^{1/2}E_{\eta} \Vert
+\delta^{-1/2}_s\Vert \omega^{1/2}E \Vert_{\Omega_s}\\
&+\varepsilon^{-1/2}\Vert \omega^{1/2}E \Vert_{\Omega\setminus\Omega_s}
\le Ck^{-1/2}   \vvvert G  \vvvert_{\omega}.
\end{split}
\end{equation}
where $k>1$ sufficiently
large and independent of $N$ and $\varepsilon$,  then
\begin{equation*}
a_{MSD}(E,G)\le \frac{1}{16} \vvvert G  \vvvert^{2}_{\omega}
\end{equation*}
where $E=(\omega^{-1}G)^{I}-\omega^{-1}G$.
\end{lemma}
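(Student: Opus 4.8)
The plan is to expand $a_{MSD}(E,G)$ according to its definition \eqref{eq:SDFEM with ACD}, namely
$$
a_{MSD}(E,G)=\varepsilon(E_{\beta},G_{\beta})+\hat\varepsilon(E_{\eta},G_{\eta})+(bE_{\beta},G)+(E,G)+\sum_{K}(bE_{\beta},\delta_{K}bG_{\beta})_{K}+\sum_{K}(E,\delta_{K}bG_{\beta})_{K},
$$
and to estimate each of the six terms separately. In every term the recipe is the same: insert the factor $\omega^{1/2}\cdot\omega^{-1/2}$ and apply the Cauchy--Schwarz inequality so that the $E$-factor becomes one of the four quantities appearing on the left-hand side of \eqref{eq: conditions-1}, while the $G$-factor is dominated by $\vvvert G\vvvert_{\omega}$ through the definition \eqref{eq:weighted norm}. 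By \eqref{eq: conditions-1} each product is then $\le Ck^{-1/2}\vvvert G\vvvert_{\omega}^{2}$, so summing the six contributions and choosing $k$ large enough that $6Ck^{-1/2}\le\tfrac1{16}$ gives the claim.

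For the two diffusion terms this is immediate, using $\varepsilon^{1/2}\le(\varepsilon+b^{2}\delta)^{1/2}$ and the fact that $\varepsilon^{1/2}\Vert\omega^{-1/2}G_{\beta}\Vert$ and $\hat\varepsilon^{1/2}\Vert\omega^{-1/2}G_{\eta}\Vert$ are both $\le\vvvert G\vvvert_{\omega}$. For the reaction term $(E,G)$ and the two streamline sums I would first split the integrals over $\Omega_{s}$ and $\Omega\setminus\Omega_{s}$ (on the latter $\delta_{K}=0$, so the streamline sums contribute nothing there). On $\Omega_{s}$ one factors out $\delta_{s}^{-1/2}\Vert\omega^{1/2}E\Vert_{\Omega_{s}}$ and exploits $\delta_{s}=C^{\ast}N^{-1}\le C$ together with $b\delta_{s}^{1/2}\Vert\omega^{-1/2}G_{\beta}\Vert_{\Omega_{s}}=\bigl(\sum_{K\subset\Omega_{s}}b^{2}\delta_{K}\Vert\omega^{-1/2}G_{\beta}\Vert_{K}^{2}\bigr)^{1/2}\le\vvvert G\vvvert_{\omega}$ and, for $(E,G)$, $c^{1/2}\Vert\omega^{-1/2}G\Vert_{\Omega_{s}}\le\vvvert G\vvvert_{\omega}$. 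On $\Omega\setminus\Omega_{s}$ one factors out $\varepsilon^{-1/2}\Vert\omega^{1/2}E\Vert_{\Omega\setminus\Omega_{s}}$ and uses $\varepsilon\le N^{-1}\le1$. All these contributions are again $\le Ck^{-1/2}\vvvert G\vvvert_{\omega}^{2}$.

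The one step that does not fit this pattern directly, and which I expect to be the crux, is the convection term $(bE_{\beta},G)$: a plain Cauchy--Schwarz leaves the factor $b\Vert\omega^{1/2}E_{\beta}\Vert$, which \eqref{eq: conditions-1} does \emph{not} control, since $b=O(1)$ whereas the available bound on $\Vert\omega^{1/2}E_{\beta}\Vert$ carries the small weight $(\varepsilon+b^{2}\delta)^{1/2}$. The remedy is integration by parts. Since $G\in V^{N}$ vanishes on $\partial\Omega$, the nodal values of $\omega^{-1}G$ on $\partial\Omega$ vanish too, so $E=(\omega^{-1}G)^{I}-\omega^{-1}G$ vanishes on $\partial\Omega$; as $\b\beta$ is constant and $E,G\in H^{1}(\Omega)$, Green's formula yields $(bE_{\beta},G)=-b(E,G_{\beta})$. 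The quantity $(E,G_{\beta})$ is then of exactly the type handled in the previous paragraph: split over $\Omega_{s}$ and $\Omega\setminus\Omega_{s}$, bound $\Vert\omega^{1/2}E\Vert$ by the corresponding term of \eqref{eq: conditions-1}, and bound $\Vert\omega^{-1/2}G_{\beta}\Vert$ by $\vvvert G\vvvert_{\omega}$ using the $\varepsilon\Vert\omega^{-1/2}G_{\beta}\Vert^{2}$ and $\sum_{K}b^{2}\delta_{K}\Vert\omega^{-1/2}G_{\beta}\Vert_{K}^{2}$ parts of the weighted norm. This makes the convection term $\le Ck^{-1/2}\vvvert G\vvvert_{\omega}^{2}$ as well, completing the argument.
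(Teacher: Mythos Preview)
Your proof is correct and follows essentially the same route as the paper: apply Cauchy--Schwarz with the weight $\omega^{1/2}\cdot\omega^{-1/2}$ to each term of $a_{MSD}(E,G)$, split the $\Vert\omega^{1/2}E\Vert$ contributions over $\Omega_s$ and $\Omega\setminus\Omega_s$, and invoke \eqref{eq: conditions-1}. The paper's own proof is very terse---it writes the final bound with the third summand $C\Vert\omega^{1/2}E\Vert\cdot\Vert\omega^{-1/2}G_{\beta}\Vert$ without comment---so your explicit identification of the integration-by-parts step $(bE_{\beta},G)=-b(E,G_{\beta})$ (using $E|_{\partial\Omega}=0$) actually fills in the one nontrivial point that the paper leaves implicit.
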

\begin{proof}
H\"{o}lder inequality gives
\begin{align*}
\left|a_{MSD}(E,G)\right|
\le &
(\varepsilon+b^{2}\delta)^{1/2}\Vert \omega^{1/2}E_{\beta} \Vert\cdot
(\varepsilon+b^{2}\delta)^{1/2}\Vert \omega^{-1/2}G_{\beta} \Vert\\
&+\hat{\varepsilon}^{1/2}\Vert \omega^{1/2}E_{\eta} \Vert\cdot
\hat{\varepsilon}^{1/2}\Vert \omega^{-1/2}G_{\eta} \Vert\\
&+C\Vert \omega^{1/2}E \Vert\cdot
\Vert \omega^{-1/2}G_{\beta} \Vert
+\Vert \omega^{1/2}E \Vert\cdot \Vert \omega^{-1/2}G \Vert.\\
&\le  \frac{1}{16} \vvvert G  \vvvert^{2}_{\omega},\quad 
\text{ apply \eqref{eq: conditions-1} and take $k$ sufficiently large} .
\end{align*}
\end{proof}
%
%

Set  $h_{\tau}=\max\{h_{x,\tau},h_{y,\tau}\}$. For $\forall \tau \in\mathcal{T}_N$ we have
\begin{align*}
\Vert \omega^{1/2} E \Vert_{\tau}
\le
C (\underset{\tau}{\max} \omega)^{1/2}
\cdot h^2_{\tau} \left( \Vert  (\omega^{-1}G)_{\beta\beta} \Vert_{\tau}+
\Vert  (\omega^{-1}G)_{\beta\eta} \Vert_{\tau} +\Vert  (\omega^{-1}G)_{\eta\eta} \Vert_{\tau} \right)
\end{align*}
and
\begin{align*}
&\Vert \omega^{1/2} E_{\beta} \Vert_{\tau}
\le  C (\underset{\tau}{\max} \omega)^{1/2}
\cdot h_{\tau} \left( \Vert  (\omega^{-1}G)_{\beta\beta} \Vert_{\tau}+
\Vert  (\omega^{-1}G)_{\beta\eta} \Vert_{\tau} +\Vert  (\omega^{-1}G)_{\eta\eta} \Vert_{\tau} \right),\\
&\Vert \omega^{1/2} E_{\eta} \Vert_{\tau}
\le  C (\underset{\tau}{\max} \omega)^{1/2}
\cdot h_{\tau} \left( \Vert  (\omega^{-1}G)_{\beta\beta} \Vert_{\tau}+
\Vert  (\omega^{-1}G)_{\beta\eta} \Vert_{\tau} +\Vert  (\omega^{-1}G)_{\eta\eta} \Vert_{\tau} \right),
\end{align*}
where we have used \cite[Corollary 3.1]{Linb1Styn2:2001-SDFEM}.  Clearly, the following estimate holds 
\begin{align*}
 &\Vert  (\omega^{-1}G)_{\beta\beta} \Vert_{\tau}+
\Vert  (\omega^{-1}G)_{\beta\eta} \Vert_{\tau} +\Vert  (\omega^{-1}G)_{\eta\eta} \Vert_{\tau} \\
\le&
C\left(  \Vert  (\omega^{-1})_{\beta\beta} G\Vert_{\tau} +\Vert  (\omega^{-1})_{\beta}G_{\beta} \Vert_{\tau}  \right) \\
&+C\left(  \Vert  (\omega^{-1})_{\beta\eta} G\Vert_{\tau} +\Vert  (\omega^{-1})_{\beta}G_{\eta} \Vert_{\tau} +\Vert  (\omega^{-1})_{\eta}G_{\beta} \Vert_{\tau} \right)\\
&+C\left(  \Vert  (\omega^{-1})_{\eta\eta} G\Vert_{\tau} +\Vert  (\omega^{-1})_{\eta}G_{\eta} \Vert_{\tau}  \right).
\end{align*}

Set $\alpha_{\omega}:=(\underset{\tau}{\min} \omega)^{-1/2}$. Lemma 4.1 in \cite{Linb1Styn2:2001-SDFEM} yields
\begin{equation}\label{eq:energy-G-b}
\begin{split}
\Vert  (\omega^{-1})_{\beta\beta} G\Vert_{\tau}
&\le
C\alpha_{\omega}\cdot
\sigma^{-3/2}_{\beta}\cdot \Vert  (\omega^{-1})^{1/2}_{\beta} G\Vert_{\tau},\\
\Vert  (\omega^{-1})_{\beta}G_{\beta} \Vert_{\tau}
&\le
C\alpha_{\omega}\cdot \sigma^{-1}_{\beta}\cdot \Vert  \omega^{-1/2}G_{\beta} \Vert_{\tau},\\
\Vert  (\omega^{-1})_{\beta\eta}G  \Vert_{\tau}
&\le
C\alpha_{\omega}\cdot
\sigma^{-1/2}_{\beta}\sigma^{-1}_{\eta}\cdot\Vert  (\omega^{-1})^{1/2}_{\beta} G\Vert_{\tau},\\
\Vert  (\omega^{-1})_{\eta }G_{ \beta}  \Vert_{\tau}
&\le
C\alpha_{\omega}\cdot
\sigma^{-1}_{\eta}\cdot\Vert  \omega^{-1/2}G_{\beta} \Vert_{\tau},\\
\Vert  (\omega^{-1})_{\eta\eta}G  \Vert_{\tau}
&\le
C\alpha_{\omega}\cdot
\sigma^{-2}_{\eta}\cdot\Vert   \omega^{-1/2} G\Vert_{\tau}.
\end{split}
\end{equation}

Similarly, for $\tau\subset\Omega_s$ we have
\begin{equation}
\Vert  (\omega^{-1})_{\eta}G_{\eta} \Vert_{\tau} 
\le
C\alpha_{\omega}\cdot
\left\{
\begin{matrix}
\sigma^{-1}_{\eta}N\cdot\Vert   \omega^{-1/2} G \Vert_{\tau}& \text{ if
 $\hat{\varepsilon}_s\le N^{-2}$}\\
\sigma^{-1}_{\eta}\hat{\varepsilon}^{-1/2}_s\cdot \hat{\varepsilon}^{1/2}_s\Vert   \omega^{-1/2} G_{\eta} \Vert_{\tau} &\text{ if $\hat{\varepsilon}_s\ge N^{-2}$}
\end{matrix}
\right.
\end{equation}
and for $ \tau\subset\Omega\setminus\Omega_s$ 
\begin{equation}
\Vert  (\omega^{-1})_{\eta}G_{\eta} \Vert_{\tau} 
\le
C\alpha_{\omega}\cdot
\varepsilon^{-1/2}\sigma^{-1}_{\eta}\cdot\varepsilon^{1/2}\Vert   \omega^{-1/2} G_{\eta}\Vert_{\tau}.
\end{equation}

For $\tau\subset\Omega_s$, we have
\begin{align}
\Vert  (\omega^{-1})_{\beta }G_{ \eta}  \Vert_{\tau}
&\le
C \underset{\tau}{\max} (\omega^{-1})_{\beta }\cdot  \Vert G_{ \eta}  \Vert_{\tau}
\le
C \underset{\tau}{\max} (\omega^{-1})_{\beta }\cdot N \cdot \Vert G  \Vert_{\tau} \nonumber\\
&\le
C N\left( \underset{\tau}{\max} (\omega^{-1})_{\beta } \right)^{1/2}
\left( \underset{\tau}{\min} (\omega^{-1})_{\beta } \right)^{1/2}
\cdot     \Vert G  \Vert_{\tau}\nonumber\\
 &\le
C\alpha_{\omega} \cdot N \sigma^{-1/2}_{\beta}\cdot     \Vert (\omega^{-1})^{1/2}_{\beta } G  \Vert_{\tau}
\end{align}
where we have used   \cite[Lemma 4.1 (vii)]{Linb1Styn2:2001-SDFEM}.
For $\tau\subset\Omega\setminus\Omega_s$, we have
\begin{align}
&\Vert  (\omega^{-1})_{\beta }G_{ \eta}  \Vert_{\tau}
\le
C\alpha_{\omega}\cdot
 \varepsilon^{-1/2}\sigma^{-1}_{\beta}\cdot \varepsilon^{1/2}\Vert  \omega^{-1/2}G_{\eta} \Vert_{\tau}\label{eq:energy-G-e}.
\end{align}

Set 
\begin{equation}\label{eq:simga-eta-hat-varepsilon}
\sigma_{\eta}(\hat{\varepsilon})=
\left\{
\begin{matrix}
\sigma^{-1}_{\eta}N& \text{ if
 $\hat{\varepsilon}_s\le N^{-2}$}\\
\sigma^{-1}_{\eta}\hat{\varepsilon}^{-1/2}_s &\text{ if $\hat{\varepsilon}_s\ge N^{-2}$}
\end{matrix}
\right..  
\end{equation}
From \eqref{eq:energy-G-b}--\eqref{eq:energy-G-e} and \cite[Lemma 4.1 (iii)]{Linb1Styn2:2001-SDFEM}, we have 
\begin{align}
&\Vert \omega^{1/2}E \Vert_{\Omega_{s}}+N^{-1}\left( \Vert \omega^{1/2}E_{\beta} \Vert_{\Omega_{s}} +\Vert \omega^{1/2}E_{\eta} \Vert_{\Omega_{s}}  \right) \nonumber\\
\le&
C N^{-2} \big(  \sigma^{-3/2}_{\beta}+\sigma^{-1}_{\beta}\delta^{-1/2}_{s}+\sigma^{-1/2}_{\beta}\sigma^{-1}_{\eta}
+\sigma^{-1}_{\eta}\delta^{-1/2}_{s}+\sigma^{-2}_{\eta}+
\sigma_{\eta}(\hat{\varepsilon})
+\sigma^{-1/2}_{\beta}N \big)  \vvvert G  \vvvert_{\omega,\Omega_s}\nonumber \\
&\Vert \omega^{1/2}E \Vert_{\Omega\setminus\Omega_s } +N^{-1}\left( \Vert \omega^{1/2}E_{\beta} \Vert_{\Omega\setminus \Omega_{s} } +\Vert \omega^{1/2}E_{\eta} \Vert_{\Omega\setminus \Omega_{s} }  \right) \label{eq:omega-E-not s}\\
\le&
C N^{-2} \big(  \sigma^{-3/2}_{\beta}+\sigma^{-1}_{\beta}\varepsilon^{-1/2}+\sigma^{-1/2}_{\beta}\sigma^{-1}_{\eta}
+\sigma^{-1}_{\eta}\varepsilon^{-1/2}+\sigma^{-2}_{\eta} \big)  \vvvert G  \vvvert_{\omega,\Omega\setminus\Omega_s}. \nonumber
\end{align}

To make sure that \eqref{eq: conditions-1}  holds,  we should have
\begin{align}
&\Vert \omega^{1/2}E_{\beta} \Vert_{\Omega_{s}}  \le C k^{-1/2} (\varepsilon+\delta_s)^{-1/2}  \vvvert G  \vvvert_{\omega,\Omega_s  },\label{eq:condition-2-1}\\
&\Vert \omega^{1/2}E_{\eta} \Vert_{\Omega_{s}}  \le C k^{-1/2} \hat{\varepsilon}^{-1/2}_s  \vvvert G  \vvvert_{\omega,\Omega_s  },\label{eq:condition-2-2}\\
&\Vert \omega^{1/2}E_{\beta} \Vert_{\Omega\setminus \Omega_{s} } +\Vert \omega^{1/2}E_{\eta} \Vert_{\Omega\setminus \Omega_{s} }
\le Ck^{-1/2} \varepsilon^{-1/2}  \vvvert G  \vvvert_{\omega,\Omega\setminus \Omega_{s}   },\label{eq:condition-2-3}\\
&\Vert \omega^{1/2}E \Vert_{\Omega_{s}} \le C k^{-1/2} \delta_s^{ 1/2} \vvvert G  \vvvert_{\omega,\Omega_s  },\label{eq:condition-2-4}
\end{align}
and
\begin{equation}\label{eq:condition-2-5}
\Vert \omega^{1/2}E \Vert_{ \Omega\setminus \Omega_{s}  } \le C k^{-1/2} \varepsilon^{ 1/2} \vvvert G  \vvvert_{\omega, \Omega\setminus \Omega_{s}  }.
\end{equation}
To ensure that  \eqref{eq:condition-2-1} holds true,  we can set
\begin{equation}\label{eq:sigma-beta-eta-2}
\begin{split}
&\sigma_{\beta}\ge k N^{-1},\\
&\sigma_{\eta}\ge k N^{-3/4},\quad 
\sigma_{\eta}\ge  \sigma_{\eta}^*:=
\left\{
\begin{matrix}
kN^{-1/2} &\text{if $\hat{\varepsilon}_s\le N^{-2}$}\\
k\hat{\varepsilon}^{-1/2}_sN^{-3/2} &\text{if $\hat{\varepsilon}_s\ge N^{-2}$}
\end{matrix}
\right.
.
\end{split}
\end{equation}
Assume that \eqref{eq:sigma-beta-eta-2} holds true, then \eqref{eq:condition-2-1}--\eqref{eq:condition-2-4} hold and we have
\begin{equation}\label{eq:omega-E-not s}
\Vert \omega^{1/2}E \Vert_{\Omega\setminus\Omega_s } \le Ck^{-1} \varepsilon^{-1/2}  N^{-1}  \vvvert G  \vvvert_{\omega  }.
\end{equation}

%

Next, we are to obtain sharper bounds for $\Vert \omega^{1/2}E \Vert_{\Omega\setminus\Omega_s }$.  Following the techniques of (see \cite[Lemma 4.4]{Linb1Styn2:2001-SDFEM}), we have
\begin{equation}\label{eq:omega-E}
(\omega^{1/2}E)(\boldsymbol{x})=\int_{\boldsymbol{x}}^{\Gamma(\boldsymbol{x})}(\omega^{1/2}E)_{\eta}\mathrm{d}s
\end{equation}
where $\boldsymbol{x}\in\Omega\setminus\Omega_{s}$, $\Gamma(\boldsymbol{x})\in \partial\Omega$ satisfies
  $(\boldsymbol{x}-\Gamma(\boldsymbol{x}))\cdot \beta=0$ and the following condition:
 \begin{equation*}
 |\boldsymbol{x}-\Gamma(\boldsymbol{x})|=\underset{y}{\min} |\boldsymbol{x}-\boldsymbol{y}|,  \text{where $ \boldsymbol{y}\in \partial\Omega$ and $(\boldsymbol{x}-\boldsymbol{y})\cdot \beta=0$, }
 \end{equation*}
From \eqref{eq:omega-E}, we have
\begin{align*}
&\Vert \omega^{1/2}E \Vert^{2}_{\Omega_x\cup\Omega_{xy} }=
\int^1_{1-\lambda_x}\int^{1  }_{0}
\left[\int_{\boldsymbol{x}}^{\Gamma(\boldsymbol{x})}(\omega^{1/2}E)_{\eta}\mathrm{d}s\right]^{2}\mathrm{d}y\mathrm{d}x\\
&\le
C\lambda^{2}_{x}
\left\{\Vert(\omega^{1/2})_{\eta}E\Vert^{2}_{ \Omega\setminus\Omega_s }+
 \Vert\omega^{1/2}E_{\eta}\Vert^{2}_{  \Omega\setminus\Omega_s }
\right\}\\
&\le
C\varepsilon^{2}\ln^{2}N
\left( \sigma^{-2}_{\eta}\Vert \omega^{1/2}E \Vert^{2}_{ \Omega\setminus\Omega_s }+
 \Vert\omega^{1/2}E_{\eta}\Vert^{2}_{ \Omega\setminus\Omega_s }
\right).
\end{align*}
Similar argument holds for $\Omega_y$. Thus, we have
\begin{equation}\label{eq:E-Omega-x-y-xy}
\Vert \omega^{1/2}E \Vert^{2}_{\Omega\setminus\Omega_s }\le
C\varepsilon^{2}\ln^{2}N
\left( \sigma^{-2}_{\eta}\Vert \omega^{1/2}E \Vert^{2}_{ \Omega\setminus\Omega_s }+
 \Vert\omega^{1/2}E_{\eta}\Vert^{2}_{ \Omega\setminus\Omega_s }
\right).
\end{equation}
To make sure that \eqref{eq:condition-2-5} holds,  according to \eqref{eq:E-Omega-x-y-xy}  the following  estimates should hold
\begin{align}
&\varepsilon^{2}\ln^{2}N
 \sigma^{-2}_{\eta}\Vert \omega^{1/2}E \Vert^{2}_{ \Omega\setminus\Omega_s } \le C k^{-1} \varepsilon \vvvert G  \vvvert^2_{\omega, \Omega\setminus \Omega_{s}  }\nonumber\\
&\Vert\omega^{1/2}E_{\eta}\Vert^{2}_{ \Omega\setminus\Omega_s }
 \le C k^{-1} \varepsilon^{-1} \ln^{-2}N \vvvert G  \vvvert^2_{\omega, \Omega\setminus \Omega_{s}  }\label{eq:omega-E-eta-x-y-xy}
\end{align}
From \eqref{eq:omega-E-eta-x-y-xy} and \eqref{eq:omega-E-not s}, we can set
\begin{equation}\label{eq:sigma-beta-eta-3}
\sigma_{\beta} \ge k N^{-1}\ln N,\quad  \sigma_{\eta}\ge k N^{-1}\ln  N,\quad  \sigma_{\eta}\ge k \varepsilon^{1/4}N^{-1/2}\ln^{1/2}  N.
\end{equation}

Assume that \eqref{eq:sigma-beta-eta-2} and  \eqref{eq:sigma-beta-eta-3} hold true,  substituting \eqref{eq:omega-E-not s} and \eqref{eq:omega-E-eta-x-y-xy} into the right-hand side of
\eqref{eq:E-Omega-x-y-xy}, we have
\begin{align}
\Vert \omega^{1/2}E \Vert^{2}_{\Omega\setminus\Omega_s }&\le
Ck^{-1}\varepsilon 
\vvvert G  \vvvert^{2}_{\omega}.
\end{align}

Thus, we have the following lemma.
\begin{lemma}\label{lemma derivative of E}
Assume  that $\sigma_{\beta}$ and $\sigma_{\eta}$ satisfy \eqref{eq:sigma-beta-eta-2} and \eqref{eq:sigma-beta-eta-3}, where $k>1$ sufficiently
large and independent of $N$ and $\varepsilon$, then we have
\begin{align*}
&\Vert \omega^{1/2}E \Vert_{ \Omega_s } \le Ck^{-1/2}N^{-1/2} \vvvert G   \vvvert_{\omega,\Omega_s },\\
&\Vert \omega^{1/2}E \Vert_{\Omega\setminus\Omega_s }
\le
Ck^{-1/2}\varepsilon^{1/2} \vvvert G  \vvvert_{\omega,\Omega\setminus\Omega_s}
\end{align*}
and
\begin{align*}
\Vert \omega^{1/2}E_{\beta}\Vert_{\Omega_{s}}+\Vert \omega^{1/2}E_{\eta}\Vert_{\Omega_{s}}&\le Ck^{-1/2}N^{1/2} \vvvert G  \vvvert_{\omega,\Omega_s },\\
\Vert \omega^{1/2}E_{\beta}  \Vert_{\Omega \setminus\Omega_s }+\Vert \omega^{1/2}E_{\eta}  \Vert_{\Omega \setminus\Omega_s }&\le Ck^{-1/2}\varepsilon^{-1/2}\ln^{-1}N  \vvvert G   \vvvert_{\omega,\Omega\setminus\Omega_s}.
\end{align*}
\end{lemma}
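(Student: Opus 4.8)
The plan is to obtain the four estimates by assembling the bounds already accumulated above and then inserting the mesh data together with the lower bounds \eqref{eq:sigma-beta-eta-2} and \eqref{eq:sigma-beta-eta-3} on $\sigma_{\beta}$ and $\sigma_{\eta}$; no genuinely new ingredient is needed, the work being a term-by-term size check. \textbf{Step 1 (reduction to weighted energy quantities of $G$).} I start from the element-wise interpolation bounds for $\Vert\omega^{1/2}E\Vert_{\tau}$, $\Vert\omega^{1/2}E_{\beta}\Vert_{\tau}$, $\Vert\omega^{1/2}E_{\eta}\Vert_{\tau}$ displayed just after Lemma \ref{lem:4}, which carry $h_{\tau}^{2}$ resp.\ $h_{\tau}$ times the $L^{2}(\tau)$ norms of the second-order directional derivatives of $\omega^{-1}G$. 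I replace the latter using \eqref{eq:energy-G-b}--\eqref{eq:energy-G-e} and \eqref{eq:simga-eta-hat-varepsilon}, so that every contribution becomes $C\alpha_{\omega}$ times a monomial in $\sigma_{\beta}^{-1},\sigma_{\eta}^{-1}$ times one of $\Vert(\omega^{-1})^{1/2}_{\beta}G\Vert_{\tau}$, $\Vert\omega^{-1/2}G\Vert_{\tau}$, $\Vert\omega^{-1/2}G_{\beta}\Vert_{\tau}$, and either $\hat\varepsilon_{s}^{1/2}\Vert\omega^{-1/2}G_{\eta}\Vert_{\tau}$ (on $\Omega_{s}$) or $\varepsilon^{1/2}\Vert\omega^{-1/2}G_{\eta}\Vert_{\tau}$ (on $\Omega\setminus\Omega_{s}$). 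Summing over $\tau$, using $(\max_{\tau}\omega)(\min_{\tau}\omega)^{-1}\le C$ (so that the $\omega$-factor cancels against $\alpha_{\omega}$) and $h_{\tau}\le 2N^{-1}$ throughout $\bar\Omega$, and noting that each localized quantity above is $\le C\vvvert G\vvvert_{\omega,\cdot}$ by the definition of the norm --- with the sharper $\Vert\omega^{-1/2}G_{\beta}\Vert_{\Omega_{s}}\le C\delta_{s}^{-1/2}\vvvert G\vvvert_{\omega,\Omega_{s}}$, since $\delta_{s}=C^{\ast}N^{-1}\ge\varepsilon$ --- I recover exactly the two bracketed estimates displayed before \eqref{eq:condition-2-1}.

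\textbf{Step 2 (the $\Omega_{s}$ bounds).} Into the $\Omega_{s}$ estimate of Step 1 I substitute \eqref{eq:sigma-beta-eta-2} and \eqref{eq:sigma-beta-eta-3} and compare each summand against the target sizes $\delta_{s}^{1/2}=CN^{-1/2}$ for $\Vert\omega^{1/2}E\Vert_{\Omega_{s}}$ and $(\varepsilon+\delta_{s})^{-1/2}\le CN^{1/2}$ for $\Vert\omega^{1/2}E_{\beta}\Vert_{\Omega_{s}}+\Vert\omega^{1/2}E_{\eta}\Vert_{\Omega_{s}}$. Each summand then comes out $\le Ck^{-1/2}$ times the target, with a leftover factor $k^{-1/2}$ (or $k^{-1}$) that absorbs the constant once $k$ is large; this gives the first assertion and the first line of the fourth.

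\textbf{Step 3 (the $\Omega\setminus\Omega_{s}$ bounds).} Here $\hat\varepsilon=\varepsilon$, and the same substitution first yields the coarse bound \eqref{eq:omega-E-not s}. To sharpen $\Vert\omega^{1/2}E\Vert_{\Omega\setminus\Omega_{s}}$ I use the crosswind integral representation \eqref{eq:omega-E}: $\omega^{1/2}E$ vanishes at the point of $\partial\Omega$ reached by integrating along $\eta$ from a point of $\Omega\setminus\Omega_{s}$, over a path of length $O(\lambda_{x})$ or $O(\lambda_{y})=O(\varepsilon\ln N)$, which produces the recursion \eqref{eq:E-Omega-x-y-xy} (whose $\Vert\omega^{1/2}E\Vert^{2}_{\Omega\setminus\Omega_{s}}$ term on the left is absorbable since $C\varepsilon^{2}\ln^{2}N\,\sigma_{\eta}^{-2}\le Ck^{-2}\le\tfrac12$ under \eqref{eq:sigma-beta-eta-3}). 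Feeding \eqref{eq:omega-E-not s} and the refined bound \eqref{eq:omega-E-eta-x-y-xy} for $\Vert\omega^{1/2}E_{\eta}\Vert_{\Omega\setminus\Omega_{s}}$ into the right side of \eqref{eq:E-Omega-x-y-xy} gives $\Vert\omega^{1/2}E\Vert^{2}_{\Omega\setminus\Omega_{s}}\le Ck^{-1}\varepsilon\vvvert G\vvvert^{2}_{\omega}$, the second assertion; and the $E_{\eta}$ (and the analogous $E_{\beta}$) bound in the second line of the fourth assertion is \eqref{eq:omega-E-eta-x-y-xy} itself, obtained by substituting \eqref{eq:sigma-beta-eta-3} into the term-by-term $\Omega\setminus\Omega_{s}$ estimate of Step 1.

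I expect the only delicate point to be the summands carrying $\sigma_{\eta}^{-2}$, and, on $\Omega_{s}$, those descending from $\Vert(\omega^{-1})_{\eta}G_{\eta}\Vert_{\tau}$ and $\Vert(\omega^{-1})_{\beta}G_{\eta}\Vert_{\tau}$: for these the crude $\sigma_{\eta}\ge kN^{-3/4}$ does not suffice, and one must invoke $\sigma_{\eta}\ge k\hat\varepsilon_{s}^{-1/2}N^{-3/2}$ in the regime $\hat\varepsilon_{s}\ge N^{-2}$ and, on $\Omega\setminus\Omega_{s}$, $\sigma_{\eta}\ge k\varepsilon^{1/4}N^{-1/2}\ln^{1/2}N$, which is engineered precisely so that $N^{-1}\sigma_{\eta}^{-2}\le Ck^{-2}\varepsilon^{-1/2}\ln^{-1}N$; this is what forces the extra $\ln^{-1}N$ in the $\Omega\setminus\Omega_{s}$ derivative bound, while the conditions $\sigma_{\beta},\sigma_{\eta}\ge kN^{-1}\ln N$ handle the remaining $\ln$-sensitive terms. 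All other summands reduce to elementary comparisons of powers of $N$, $\varepsilon$ and $\ln N$, using only $\varepsilon\le N^{-1}$.
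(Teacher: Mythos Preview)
Your proposal is correct and follows essentially the same route as the paper: the paper's ``proof'' is precisely the chain of estimates displayed immediately before the lemma (element-wise interpolation bounds, substitution of \eqref{eq:energy-G-b}--\eqref{eq:energy-G-e}, summation to the bracketed inequalities, verification of \eqref{eq:condition-2-1}--\eqref{eq:condition-2-4} under \eqref{eq:sigma-beta-eta-2}, and then the crosswind integral \eqref{eq:omega-E}--\eqref{eq:E-Omega-x-y-xy} together with \eqref{eq:sigma-beta-eta-3} for the sharper $\Omega\setminus\Omega_s$ bound), culminating in ``Thus, we have the following lemma.'' Your Steps~1--3 reproduce exactly this assembly, including the identification of the $\sigma_\eta^{-2}$ term as the one forcing $\sigma_\eta\ge k\varepsilon^{1/4}N^{-1/2}\ln^{1/2}N$ and hence the extra $\ln^{-1}N$ in the $\Omega\setminus\Omega_s$ derivative estimate.
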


\begin{theorem}
Assume  that $\sigma_{\beta}$ and $\sigma_{\eta}$ satisfy \eqref{eq:sigma-beta-eta-1}, \eqref{eq:sigma-beta-eta-2} and \eqref{eq:sigma-beta-eta-3}, where
$k$ is chosen so  that Lemmas \ref{lem:1}, \ref{lem:2} and \ref{lem:4} hold. Then for
$\boldsymbol{x}^{\ast}\in \Omega\setminus\Omega_{xy}$, we have
$$
 \Vert G \Vert_{MSD} \le \sqrt{8}\vvvert G \vvvert_{\omega} \le  
 \left\{
\begin{matrix}
CN\sigma^{1/2}_{\beta} &\text{ if $\b{x}^*\in\Omega_s$ }\\
CN^{1/2}\ln^{1/2} N &\text{ if $\b{x}^*\in\Omega_x\cup\Omega_y$ }
\end{matrix}
\right.
$$
\end {theorem}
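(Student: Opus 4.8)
The plan is to first bound $\vvvert G\vvvert_{\omega}$ by assembling Lemmas~\ref{lem:1}, \ref{lem:2}, \ref{lem:4} and \ref{lemma derivative of E} through the identities \eqref{eq:norm and bilinear form} and \eqref{eq:idea-weighted estimate}, and then to transfer the bound to $\Vert G\Vert_{MSD}$. The transfer is the easy inequality $\Vert G\Vert_{MSD}\le\sqrt8\,\vvvert G\vvvert_{\omega}$: since $g(r)g(-r)=4/(2+e^{r}+e^{-r})\le1$ and $g(s)<2$, one has $0<\omega(\boldsymbol{x})<2$ on $\bar\Omega$, hence $\omega^{-1}>\tfrac12$; comparing \eqref{eq:weighted norm} with $\Vert\cdot\Vert^{2}_{MSD}$ term by term (each $\omega^{-1/2}$-weighted term dominating half of the corresponding unweighted one, the reaction contribution being controlled by the $c\Vert\omega^{-1/2}\cdot\Vert^{2}$ term of \eqref{eq:weighted norm}) then yields $\Vert G\Vert^{2}_{MSD}\le8\,\vvvert G\vvvert^{2}_{\omega}$. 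So the whole content is the estimate of $\vvvert G\vvvert_{\omega}$.

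Before assembling I would check that Lemma~\ref{lem:4} is applicable, i.e.\ that \eqref{eq: conditions-1} holds. Here the standing assumption $\varepsilon\le N^{-1}$, the choice \eqref{eq: delta-K} (so $\delta_{s}=C^{\ast}N^{-1}$, whence $(\varepsilon+b^{2}\delta)^{1/2}\le CN^{-1/2}$ and $\delta^{-1/2}_{s}\le CN^{1/2}$ on $\Omega_{s}$), the bound $\hat{\varepsilon}_{s}\le N^{-1}$, and the facts $\delta\equiv0$, $\hat{\varepsilon}\equiv\varepsilon$ on $\Omega\setminus\Omega_{s}$ all come in. Splitting the first two terms of \eqref{eq: conditions-1} over $\Omega_{s}$ and $\Omega\setminus\Omega_{s}$ and inserting the four bounds of Lemma~\ref{lemma derivative of E} with the matching prefactors $(\varepsilon+b^{2}\delta)^{1/2}$, $\hat{\varepsilon}^{1/2}$, $\delta^{-1/2}_{s}$, $\varepsilon^{-1/2}$ reduces the left-hand side of \eqref{eq: conditions-1} to $Ck^{-1/2}\vvvert G\vvvert_{\omega}$ (modulo a harmless $\ln^{-1}N\le1$ on $\Omega\setminus\Omega_{s}$); thus Lemma~\ref{lem:4} gives $\bigl|a_{MSD}(E,G)\bigr|\le\tfrac1{16}\vvvert G\vvvert^{2}_{\omega}$.

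The assembly is then short. Lemma~\ref{lem:1} (available since \eqref{eq:sigma-beta-eta-1} holds) gives $a_{MSD}(\omega^{-1}G,G)\ge\tfrac14\vvvert G\vvvert^{2}_{\omega}$. Plugging in \eqref{eq:idea-weighted estimate}, using bilinearity of $a_{MSD}$ (the $E$ of \eqref{eq:idea-weighted estimate} is minus the $E$ of Lemma~\ref{lem:4}, which is immaterial), and the previous step,
\[
\tfrac14\vvvert G\vvvert^{2}_{\omega}\le\bigl|a_{MSD}(E,G)\bigr|+\bigl|(\omega^{-1}G)(\boldsymbol{x}^{\ast})\bigr|\le\tfrac1{16}\vvvert G\vvvert^{2}_{\omega}+\bigl|(\omega^{-1}G)(\boldsymbol{x}^{\ast})\bigr|.
\]
Since $\boldsymbol{x}^{\ast}\in\Omega\setminus\Omega_{xy}$ and $1\ge\sigma_{\beta}\ge k(\varepsilon+\delta_{M})$, Lemma~\ref{lem:2} applies and absorbs a further $\tfrac1{16}\vvvert G\vvvert^{2}_{\omega}$, leaving $CN^{2}\sigma_{\beta}$ if $\boldsymbol{x}^{\ast}\in\Omega_{s}$ and $CN\ln N$ if $\boldsymbol{x}^{\ast}\in\Omega_{x}\cup\Omega_{y}$. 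As $\tfrac14-\tfrac1{16}-\tfrac1{16}=\tfrac18>0$, subtracting $\tfrac18\vvvert G\vvvert^{2}_{\omega}$ leaves $\vvvert G\vvvert^{2}_{\omega}\le8CN^{2}\sigma_{\beta}$ (resp.\ $8CN\ln N$); taking square roots, renaming constants, and combining with the first inequality gives the two claimed bounds on $\Vert G\Vert_{MSD}$.

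\textbf{Main obstacle.} The one place that needs care — the analysis itself being already done in Lemmas~\ref{lem:2} and \ref{lemma derivative of E} — is the bookkeeping of constants: one must verify that \eqref{eq:sigma-beta-eta-1}, \eqref{eq:sigma-beta-eta-2} and \eqref{eq:sigma-beta-eta-3} can be satisfied simultaneously by a single pair $(\sigma_{\beta},\sigma_{\eta})$ and a single $k>1$ large enough for Lemmas~\ref{lem:1}, \ref{lem:2}, \ref{lem:4} and \ref{lemma derivative of E} all to hold (in particular that $\sigma_{\beta}\le1$, required in Lemma~\ref{lem:2}, is consistent with the lower bounds on $\sigma_{\beta}$, which are all of order $N^{-1}\ln N$), and that the three absorbed fractions (the $\tfrac34$ used inside Lemma~\ref{lem:1} to reach its $\tfrac14$ and the two $\tfrac1{16}$'s from Lemmas~\ref{lem:2} and \ref{lem:4}) leave a strictly positive coefficient of $\vvvert G\vvvert^{2}_{\omega}$. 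No genuine analytic difficulty remains.
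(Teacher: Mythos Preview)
Your proposal is correct and follows essentially the same route as the paper: invoke Lemma~\ref{lem:1} for the lower bound $\tfrac14\vvvert G\vvvert_\omega^2\le a_{MSD}(\omega^{-1}G,G)$, split via \eqref{eq:idea-weighted estimate}, absorb the two $\tfrac1{16}$ contributions from Lemmas~\ref{lem:2} and \ref{lem:4}, and conclude. The paper simply cites \cite[Theorem~4.1]{Linb1Styn2:2001-SDFEM} for $\Vert G\Vert_{MSD}\le\sqrt8\,\vvvert G\vvvert_\omega$ rather than sketching it, and it treats the verification of hypothesis \eqref{eq: conditions-1} (your use of Lemma~\ref{lemma derivative of E}) as already completed in the discussion preceding that lemma; your observation about the sign discrepancy in the two definitions of $E$ is accurate and, as you note, immaterial.
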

\begin{proof}
The readers are referred to \cite[Theorem 4.1]{Linb1Styn2:2001-SDFEM} for the estimate $ \Vert G \Vert_{MSD} \le \sqrt{8}\vvvert G \vvvert_{\omega}$.
Considering \eqref{eq:norm and bilinear form}, \eqref{eq:idea-weighted estimate}  and  Lemmas \ref{lem:1}--\ref{lem:4}, we obtain
\begin{align*}
\frac{1}{4} \vvvert G \vvvert^{2}_{\omega} \le &
a_{MSD}(\omega^{-1}G,G) = a_{MSD}(E,G)+(\omega^{-1}G)(\boldsymbol{x}^{\ast})\\
\le &\frac{1}{8} \vvvert G \vvvert^{2}_{\omega}+
\left\{
\begin{matrix}
CN^2\sigma_{\beta} &\text{ if $\b{x}^*\in\Omega_s$ }\\
CN\ln N &\text{ if $\b{x}^*\in\Omega_x\cup\Omega_y$ }
\end{matrix}
\right.
.
\end{align*}
Thus we are done.
\end{proof}

From \eqref{eq:sigma-beta-eta-1}, \eqref{eq:sigma-beta-eta-2} and \eqref{eq:sigma-beta-eta-3} we have
$$
\sigma_{\beta}\ge k N^{-1}\ln N,
\quad 
\sigma_{\eta}\ge k\max\{ \hat{\varepsilon }^{1/2}_M,N^{-3/4},\sigma_{\eta}^*, N^{-1}\ln  N,\varepsilon^{1/4}N^{-1/2}\ln^{1/2}  N\}
$$
where $\sigma_{\eta}^*=
\left\{
\begin{matrix}
kN^{-1/2} &\text{if $\hat{\varepsilon}_s\le N^{-2}$}\\
k\hat{\varepsilon}^{-1/2}_sN^{-3/2} &\text{if $\hat{\varepsilon}_s\ge N^{-2}$}
\end{matrix}
\right.$. Note that if $N\ge 4$,  then  $N^{1/2}\ge \ln N$ and
$$
N^{-1/2} \ge N^{-1}\ln N,\quad 
N^{-1/2} \ge \varepsilon^{1/4} N^{-1/2}\ln^{1/2}  N\quad \text{ for $N\ge 4$}.
$$

Now, we consider the case of $\hat{\varepsilon}=\varepsilon$. Clearly, $\hat{\varepsilon }_M=\hat{\varepsilon}_s=\varepsilon$.
\begin{itemize}
\item
 If $\varepsilon\le N^{-2}$, then $\sigma_{\eta}^*=kN^{-1/2}$  and we have
 \begin{align*}
&N^{-1/2}\ge \varepsilon^{1/2},\quad N^{-1/2}\ge  N^{-3/4},\\
&N^{-1/2} \ge N^{-1}\ln N\quad \text{for $N\ge 4$},\\
&N^{-1/2} \ge \varepsilon^{1/4}N^{-1/2}\ln ^{1/2} N\quad \text{for $N\ge 4$}.
\end{align*}

\item
If $N^{-2}<\varepsilon\le N^{-3/2}$, then $\sigma_{\eta}^*=k\varepsilon^{-1/2}N^{-3/2}$ and we have
\begin{align*}
&\varepsilon^{-1/2}N^{-3/2}\ge \varepsilon^{1/2},\quad \varepsilon^{-1/2}N^{-3/2}\ge  N^{-3/4}.
\end{align*}
It means that $\sigma_{\eta}\ge   k\max\{ \varepsilon^{-1/2}N^{-3/2},N^{-1}\ln N,\varepsilon^{1/4}N^{-1/2}\ln ^{1/2} N \} $. Note that if $N^{-2}<\varepsilon\le N^{-3/2}$,
\begin{align*}
 &N^{-1/2} >\varepsilon^{-1/2}N^{-3/2}\\
  &N^{-1/2} >N^{-1}\ln N\quad \text{for $N\ge 4$},\\
    &N^{-1/2} >\varepsilon^{1/4}N^{-1/2}\ln ^{1/2} N \quad \text{for $N\ge 4$}.
\end{align*}
\item
If $N^{-3/2}<\varepsilon\le N^{-1}$, then $\sigma_{\eta}^*=k\varepsilon^{-1/2}N^{-3/2}$ and we have
\begin{align*}
&\varepsilon^{1/2}\ge  \varepsilon^{-1/2}N^{-3/2},\quad \varepsilon^{1/2}\ge  N^{-3/4}.
\end{align*}
It means that $\sigma_{\eta}\ge   k\max\{ \varepsilon^{1/2}, N^{-1}\ln N,\varepsilon^{1/4}N^{-1/2}\ln ^{1/2} N \} $. 
Note that if $N^{-3/2}<\varepsilon\le N^{-1}$,
\begin{align*}
 &N^{-1/2} >\varepsilon^{1/2}\\
  &N^{-1/2} >N^{-1}\ln N\quad \text{for $N\ge 4$},\\
    &N^{-1/2} >\varepsilon^{1/4}N^{-1/2}\ln ^{1/2} N \quad \text{for $N\ge 4$}.
\end{align*}
\end{itemize}
These mean that \emph{in the case of $\hat{\varepsilon}=\varepsilon$ we can set $\sigma_{\eta}\ge k N^{-1/2}$}.\\
~\\

If $
\hat{\varepsilon}=
\left\{
\begin{matrix}
\tilde{\varepsilon} & \text{if $\b{x}\in \Omega_s$},\\
\varepsilon & \text{otherwise},
\end{matrix}
\right.
$
where $\tilde{\varepsilon}:=\max \left( \varepsilon, N^{-3/2} \right)$. Clearly, $\hat{\varepsilon }_M=\hat{\varepsilon}_s=\tilde{\varepsilon}$. Note that
$\sigma_{\eta}^*=k\tilde{\varepsilon}^{-1/2}N^{-3/2}$ for $\hat{\varepsilon}_s \ge N^{-3/2}$.
\begin{itemize}
\item
 If $\varepsilon\le N^{-3/2}$, then $\hat{\varepsilon}_M=\tilde{\varepsilon}=N^{-3/2}$  and we have
 \begin{align*}
&\hat{\varepsilon}^{1/2}_M=\tilde{\varepsilon}^{1/2}= N^{-3/4},\quad \tilde{\varepsilon}^{-1/2}N^{-3/2}= N^{-3/4}.
\end{align*}
It means that $\sigma_{\eta}\ge   k\max\{ \tilde{\varepsilon}^{1/2}, N^{-1}\ln N,\varepsilon^{1/4}N^{-1/2}\ln ^{1/2} N \} $.   Note that
\begin{align*}
&\tilde{\varepsilon}^{1/2}\ln^{1/2}N >N^{-1}\ln N\quad \text{for $N\ge 4$},\\
    &\tilde{\varepsilon}^{1/2}\ln^{1/2}N >\varepsilon^{1/4}N^{-1/2}\ln ^{1/2} N \quad \text{for $N\ge 4$}.
\end{align*}
\item
If $N^{-3/2}<\varepsilon\le N^{-1}$, then $\tilde{\varepsilon}=\varepsilon>N^{-3/2}$ and we have
\begin{align*}
&\tilde{\varepsilon}^{1/2}>\varepsilon^{-1/2}N^{-3/2},\\
&\tilde{\varepsilon}^{1/2}\ln^{1/2}N\ge N^{-1}\ln  N \quad \text{for $N\ge 4$}\\
&\tilde{\varepsilon}^{1/2}\ln^{1/2}N\ge \varepsilon^{1/4}N^{-1/2}\ln ^{1/2} N \quad \text{for $N\ge 4$}
\end{align*}
\end{itemize}
These mean that   \emph{we can take $\sigma_{\eta}\ge k \tilde{\varepsilon}^{1/2}\ln^{1/2}N$}. 

From the above derivations, we have the following remark.
\begin{remark}
If we set $\hat{\varepsilon}=\varepsilon$,  we can take
$$
\sigma_{\beta}=k N^{-1}\ln N,\quad \sigma_{\eta}=
kN^{-1/2}.
$$

If we set $\hat{\varepsilon}$ as in  \cite[pg 463]{Linb1Styn2:2001-SDFEM}, i.e.,
$$
\hat{\varepsilon}=
\left\{
\begin{matrix}
\tilde{\varepsilon} & \text{if $\b{x}\in \Omega_s$},\\
\varepsilon & \text{otherwise},
\end{matrix}
\right.
$$
where $\tilde{\varepsilon}:=\max \left( \varepsilon, N^{-3/2} \right)$,
we can define
$$
\sigma_{\beta}=k N^{-1}\ln N,\quad \sigma_{\eta}=k\tilde{\varepsilon}^{1/2}\ln^{1/2}N.
$$
It seems that the definition of $\sigma_{\beta}$ in \cite{Linb1Styn2:2001-SDFEM}  is not proper. 
\end{remark}

\end{document}